\documentclass[letter,10pt]{amsart}
\usepackage{amsmath, amsfonts, amssymb,amscd,graphics,graphicx,eucal,setspace} 
\usepackage{latexsym} 
\usepackage[dvipsnames]{xcolor}

\usepackage{url}
\usepackage{tikz,subcaption,mathtools,tikz-cd}
\usepackage{booktabs}
\usetikzlibrary{decorations.markings}

\numberwithin{equation}{section}

\theoremstyle{plain}
\newtheorem{theorem}{Theorem}[section]
\newtheorem{lemma}[theorem]{Lemma}
\newtheorem{corollary}[theorem]{Corollary}
\newtheorem{proposition}[theorem]{Proposition}

\newtheorem{thmx}{Theorem}

\theoremstyle{definition}
\newtheorem{remark}[theorem]{Remark}
\newtheorem{example}[theorem]{Example}
\newtheorem{definition}[theorem]{Definition}

\newtheorem*{question}{Question}

\def\C{\mathbb C}
\def\R{\mathbb R}

\def\Z{\mathbb Z}

\def\A{\mathcal M}

\DeclareMathOperator{\SL}{SL}

\DeclareMathOperator{\Lie}{Lie}

\DeclareMathOperator{\Int}{Int}
\DeclareMathOperator{\Hom}{Hom}

\DeclareMathOperator{\Ad}{Ad}

\def\aret{\mathcal{R}^a}
\def\gret{\mathcal{R}^g}
\def\mret{\mathcal{R}^m}

\def\y{q}

%%% caption for subfigures %%%%
\usepackage{caption}
\usepackage[labelformat=simple]{subcaption}

\captionsetup[subfigure]{margin=0pt, parskip=0pt, hangindent=0pt, indention=0pt, labelfont=rm}

\begin{document}
\title[Torus orbit closures and retractions]{Torus orbit closures in flag varieties and\\ retractions on  Weyl groups}

\date{\today}

\author[Eunjeong Lee]{Eunjeong Lee}
\address[E. Lee]{Center for Geometry and Physics, Institute for Basic Science (IBS), Pohang 37673, Republic of Korea}
\email{eunjeong.lee@ibs.re.kr}

\author[Mikiya Masuda]{Mikiya Masuda}
\address[M. Masuda]{Osaka City University Advanced Mathematics Institute (OCAMI) \& Department of Mathematics, Graduate School of Science, Osaka City University, Sumiyoshi-ku, Sugimoto, 558-8585, Osaka, Japan}
\email{masuda@osaka-cu.ac.jp}

\author[Seonjeong Park]{Seonjeong Park}
\address[S. Park]{Department of Mathematical Sciences, Korea Advanced Institute for Science and Technology (KAIST), Daejeon 34141, Republic of Korea}
\email{psjeong@kaist.ac.kr}

\keywords{Flag varieties, toric varieties,  Coxeter matroids} 

\subjclass[2010]{Primary: 14M15, 14M25, Secondary: 20F55, 52B40} 

\begin{abstract}
A finite Coxeter group $W$ has a natural metric $d$ and if $\A$ is a subset of $W$, then for each $u\in W$, there is $\y\in \A$ such that $d(u,\y)=d(u,\A)$. Such $\y$ is not unique in general but if $\A$ is a Coxeter matroid, then it is unique, and we define a retraction $\mret_\A\colon W\to \A\subset W$ so that $\mret_\A(u)=\y$. 

The $T$-fixed point set $Y^T$ of a $T$-orbit closure $Y$ in a flag variety $G/B$ is a Coxeter matroid, where $G$ is a semisimple algebraic group, $B$ is a Borel subgroup, and $T$ is a maximal torus of $G$ contained in $B$. We define a retraction $\gret_{Y}\colon W\to Y^T\subset W$ geometrically, where $W$ is the Weyl group of $G$, and show that $\gret_{Y}=\mret_{Y^T}$. We introduce another retraction $\aret_\A\colon W\to \A\subset W$ algebraically for an arbitrary subset $\A$ of $W$ when $W$ is a Weyl group of classical Lie type, and show that $\aret_\A=\mret_\A$ when $\A$ is a Coxeter matroid. 
\end{abstract}

\maketitle

\setcounter{tocdepth}{1}
\tableofcontents

\section{Introduction}

Let $W$ be a finite Coxeter group. We consider the metric $d$ on $W$ defined by 
\[
d(v,w):=\ell(v^{-1}w)=\ell(w^{-1}v)
\]
where $\ell(\ )$ denotes the length function on $W$. For a subset $\A$ of $W$, we define 
\begin{equation*} 
d(v,\A)\coloneqq \min\{d(v,w)\mid w\in \A\}.
\end{equation*}
For each $u\in W$, there exists an element $\y\in \A$ such that $d(u,\y)=d(u,\A)$ but such $\y$ is not unique in general. However, it is unique when $\A$ is a Coxeter matroid. 

A Coxeter matroid we treat in this paper is a subset $\A$ of $W$ satisfying the \emph{Maximality Property}, that is, for any $u\in W$, there is a unique element $v\in \A$ such that $w\le^u v$ for all $w\in \A$, where $w\le^u v$ means $u^{-1}w\le u^{-1}v$ in the Bruhat order. Since the multiplication by the longest element of $W$ reverses the Bruhat order on~$W$, the Maximality Property is equivalent to the \emph{Minimality Property}, that is, for each $u\in W$, there exists a unique minimal element in $\A$ with respect to $\leq^{u}$. We denote the unique minimal element by $\mret_\A(u)$, so 
\begin{equation} \label{eq:rcoxeter}
\text{$\mret_\A(u)\le^u w$\quad for all $w\in \A$}.
\end{equation}
Since $\mret_\A(u)=u$ for $u\in \A$, the map 
$$\mret_\A\colon W\to \A\subset W$$ is a retraction of $W$ onto $\A$, which we call a \emph{matroid retraction}. One can easily see the following. 

\begin{proposition}[Proposition~\ref{prop_mret_minimal_distance}] \label{prop:intro}
If $\A$ is a Coxeter matroid of $W$, then 
\begin{enumerate}
\item for each $u\in W$, there is a \emph{unique} $\y\in \A$ such that $d(u,\y)=d(u,\A)$, and \item $\y=\mret_\A(u)$.
\end{enumerate} 
\end{proposition}

It is known that a torus fixed point set in a flag variety is a Coxeter matroid. 
Let $G$ be a semisimple algebraic group over $\C$, $B$ a Borel subgroup of $G$, and $T$ a maximal torus of $G$ contained in $B$. Then the left multiplication by the torus $T$ on $G$ induces the action of $T$ on $G/B$ and we naturally identify the $T$-fixed point set $(G/B)^T$ in $G/B$ with the Weyl group $W$ of $G$. 

For $u\in W$, we consider a cone in the real Lie algebra $\mathfrak{t}_\R$ of $T$ defined by 
\[
C(u)\coloneqq\{\lambda\in\mathfrak{t}_\R\mid\langle u(\alpha),\lambda\rangle\leq 0 \quad \text{ for all simple roots }\alpha\}.
\]
The interior of $C(u)$ is the Weyl chamber corresponding to $u$. 
We regard $\mathfrak{t}_\R$ as $\mathfrak{t}_\Z\otimes\R$, where $\mathfrak{t}_\Z:=\Hom(\C^*,T)$ is the group of algebraic homomorphisms from $\C^*=\C\backslash\{0\}$ to $T$. 

Choose a point $x$ of $G/B$ and let $Y$ be the closure of the $T$-orbit of $x$ in $G/B$. Then, for any $\lambda_u\in \Int (C(u))\cap \mathfrak{t}_\Z$, we have 
\begin{equation*} 
\lim_{t\to 0}\lambda_u(t)\cdot x\in Y^T\subset (G/B)^T=W.
\end{equation*}
This limit point does not depend on the choice of $x$ defining $Y$ and $\lambda_u$. So, we denote the limit point by $\gret_Y(u)$. One can see that 
\[
\gret_Y\colon W\to Y^T\subset W
\]
is a retraction of $W$ onto $Y^T$ and we call it a \emph{geometric retraction}. Remember that $Y^T$ is a Coxeter matroid of $W$, so we also have the matroid retraction $\mret_{Y^T}\colon W\to Y^T$. Under this situation, our first main result is the following. 

\begin{thmx}[{Theorem~\ref{theo:torus-coxeter}}] \label{theo:A}
$\gret_Y=\mret_{Y^T}$ for any $T$-orbit closure $Y$ in $G/B$. 
\end{thmx}

\begin{remark}
When $Y$ is \textit{generic}, which means that $Y^T=(G/B)^T$, $Y$ is known to be a smooth toric variety (called a \textit{permutohedral variety} when $G$ is of type~$A$). In this case, the geometric retraction $\gret_Y$ is the idenitity map and the maximal cones in the fan of $Y$ are the cones $C(u)$ $(u\in W)$. Unless $Y$ is generic, $Y$ can be singular but the Orbit-Cone Correspondence in the theory of toric varieties tells us that the maximal cone corresponding to $y\in Y^T$ in the fan of $Y$ is given by $\bigcup_{u\in (\gret_Y)^{-1}(y)}C(u)$, see Remark~\ref{rmk_normal} for more details. 
\end{remark}

When a finite Coxeter group $W$ is a product $\prod_{j=1}^mW_j$ of Weyl groups $W_j$'s of classical Lie types, we introduce what we call an \emph{algebraic retraction} 
\[
\aret_\A\colon W\to \A \subset W
\]
algebraically for $\A=\prod_{j=1}^m\A_j$, where $\A_j$ is an \emph{arbitrary} subset of $W_j$ for each~$j$ (see Definition~\ref{defi:algebraic_retraction} and \eqref{equation_aret_product_fixed_points}). 
We note that if $\A$ is a Coxeter matroid of $W=\prod_{j=1}^mW_j$, then $\A$ is a product $\prod_{j=1}^m\A_j$ and each $\A_j$ is a Coxeter matroid of $W_j$. Under this situation, our second main result is the following. 

\begin{thmx}[{Theorem~\ref{thm:aret-mret}}] \label{theo:B}
$\aret_\A=\mret_\A$ for any Coxeter matroid $\A$ of $W$.
\end{thmx}

If a subset $\A$ of a finite Coxeter group $W$ is a Coxeter matroid, then 
\begin{enumerate}
\item[$(*)$] for each $u\in W$, there is a \emph{unique} $\y\in \A$ such that $d(u,\y)=d(u,\A)$
\end{enumerate}
as mentioned in Proposition~\ref{prop:intro}. This is a necessary condition for a subset $\A$ of $W$ to be a Coxeter matroid. However, this necessary condition $(*)$ is not a sufficient condition as is shown in Example~\ref{example_not_unique_closest}, where $\A$ consists of two elements. We note that the algebraic retraction is defined for an \emph{arbitrary} subset $\A$ of $W$ when $W$ is a Weyl group of classical Lie type, and Proposition~\ref{prop:intro} and Theorem~\ref{theo:B} show that if $\A$ is a Coxeter matroid of $W$, then the unique element $\y$ in $(*)$ above must be given by $\aret_\A(u)$. We ask whether these two necessary conditions are sufficient, namely

\begin{question}
Let $W$ be a Weyl group of classical Lie type. Suppose that a subset $\A$ of $W$ satisfies the following two conditions:
\begin{enumerate}
\item for each $u\in W$, there is a unique $\y\in \A$ such that $d(u,\y)=d(u,\A)$, and
\item $q=\aret_\A(u)$.
\end{enumerate}
Then, is $\A$ a Coxeter matroid?
\end{question}

We do not know any counterexample to the question and answer it affirmatively when $W$ is a symmetric group and $\A$ consists of two elements (Proposition~\ref{prop:two-elements}). 

This paper is organized as follows. In Section~\ref{sect:Coxeter_matroids}, we introduce the matroid retraction and prove Proposition~\ref{prop:intro}. We also recall the characterization of Coxeter matroids by Gelfand-Serganova in terms of the associated polytopes. In Section~\ref{sect:torus-orbit}, we define the geometric retraction, reformulate it using a Bruhat decomposition of $G/B$, and prove Theorem~\ref{theo:A}. A Coxeter matroid is called representable if it can be realized as the torus fixed point set of a torus orbit closure in $G/B$. We discuss the representability when $G$ is of type A in Section~\ref{sect:representability}. In Section~\ref{sect:Retraction on the Weyl group}, we define the algebraic retraction and prove Theorem~\ref{theo:B}. In Section~\ref{sec:characterization}, we discuss the above question. 

\medskip

\noindent\textbf{Acknowledgments.}
The authors thank to Satoru Fujishige, Gabriel C. Drummond-Cole, Damien Lejay, and Kyeong-Dong Park for their interest and valuable discussions. They also thank the anonymous referees for their valuable comments to improve the presentation of the paper drastically. 

Lee was supported by IBS-R003-D1. Masuda was supported in part by JSPS Grant-in-Aid for Scientific Research 19K03472 and a bilateral program between JSPS and RFBR. Park was supported by the Basic Science Research Program through the National Research Foundation of Korea (NRF) funded by the Government of Korea (NRF-2018R1A6A3A11047606). This work was partly supported by Osaka City University Advanced Mathematical Institute (MEXT Joint Usage/Research Center on Mathematics and Theoretical Physics JPMXP0619217849).

\section{Coxeter matroids}\label{sect:Coxeter_matroids}

A Coxeter matroid we treat in this paper is a subset of a finite Coxeter group satisfying the Maximality Property (see~\cite[\S6.1.1]{bo-ge-wh03} and \cite{White96_theory} for more details on Coxeter matroids). In this section, we review definitions and properties of Coxeter matroids and define a matroid retraction of a finite Coxeter group. The symmetric group $\mathfrak{S}_n$ on $\{1,2,\dots,n\}$ is a typical example of a Coxeter group and we often use one-line notation $w(1)w(2)\cdots w(n)$ to express $w\in \mathfrak{S}_n$ throughout this paper. 

Let $W$ be a finite Coxeter group, so generators of $W$ are prescribed. We consider the metric $d$ on $W$ defined by 
\begin{equation} \label{eq:metric}
d(v,w)\coloneqq \ell(v^{-1}w)=\ell(w^{-1}v) \qquad\text{for $v, w\in W$}
\end{equation}
where $\ell(\ )$ denotes the length function on $W$. Note that the metric $d$ is invariant under the \emph{left} multiplication of $W$. For a subset $\A$ of $W$, we define 
\begin{equation*} 
d(v,\A)\coloneqq \min\{d(v,w)\mid w\in \A\}.
\end{equation*}
For each $v\in W$, there exists an element $\y\in \A$ such that $d(v,\y)=d(v,\A)$ but such $\y$ is not unique in general. However, it is unique when $\A$ is a Coxeter matroid as is explained later (Proposition~\ref{prop_mret_minimal_distance}). 

The distance $d(v,w)$ can be interpreted geometrically. As is well-known, a finite Coxeter group $W$ can be regarded as a reflection group on a vector space $V$, where the generators of $W$ act on $V$ as reflections (see \cite[Section 5.3]{Humphreys90}). Choose a point $\nu\in V$ which is not fixed by any reflection in $W$. The convex hull $\Delta_W$ of the orbit of $\nu$ under the action of $W$ is called the \textit{$W$-permutohedron} (see \cite[Section 2.4]{FominReading07_root} and \cite{HLT11_permutahedra}). We identify $w\cdot\nu$ with $w$ for each $w\in W$. Then the vertices of $\Delta_W$ are labeled by the elements in $W$ and two vertices $v$ and $w$ are joined by an edge of $\Delta_W$ if and only if $v^{-1}w$ is a simple reflection (see \cite[Lemma~2.13]{FominReading07_root}). Therefore, the distance $d(v,w)$ can be thought of as the minimum length of the paths in $\Delta_W$ connecting $v$ and $w$ through edges of $\Delta_W$. In other words, the metric $d$ is the graph metric on the graph obtained as the $1$-skeleton of $\Delta_W$. 

\begin{example}
Take $W=\mathfrak{S}_4$. Then $\Delta_{\mathfrak{S}_4}$ is the permutohedron of dimension $3$ with elements in $\mathfrak{S}_4$ as vertices, see Figure~\ref{fig:length}. If $v=1243$ and $w=3214$, then $v^{-1}w=4213$ and hence $d(v,w)=\ell(4213)=4$. The red path in Figure~\ref{fig:length} shows a minimum-length path joining $v$ and $w$ in $\Delta_{\mathfrak{S}_4}$.
\begin{figure}[ht]
\begin{tikzpicture}[scale=5]
\tikzset{every node/.style={draw=blue!50,fill=blue!20, circle, thick, inner sep=1pt,font=\footnotesize}}
\tikzset{red node/.style = {fill=red!20!white, draw=red!75!white}}
\tikzset{red line/.style = {line width=0.3ex, red, nearly opaque}}

\coordinate (4231) at (1/3, 1/2, 1/6); 
\coordinate (2413) at (2/3, 1/2, 1/6); 
\coordinate (1243) at (5/6, 2/3, 1/2); 
\coordinate (2143) at (5/6, 1/2, 1/3); 
\coordinate (2134) at (5/6, 1/3, 1/2); 
\coordinate (1423) at (2/3, 5/6, 1/2); 
\coordinate (3142) at (1/3, 1/2, 5/6); 
\coordinate (1324) at (2/3, 1/2, 5/6); 
\coordinate (1234) at (5/6, 1/2, 2/3); 
\coordinate (1342) at (1/2, 2/3, 5/6); 
\coordinate (4123) at (1/2, 5/6, 1/3); 
\coordinate (4213) at (1/2, 2/3, 1/6); 
\coordinate (1432) at (1/2, 5/6, 2/3); 
\coordinate (4132) at (1/3, 5/6, 1/2); 
\coordinate (2314) at (2/3, 1/6, 1/2); 
\coordinate (3214) at (1/2, 1/6, 2/3); 
\coordinate (3124) at (1/2, 1/3, 5/6); 
\coordinate (3241) at (1/3, 1/6, 1/2); 
\coordinate (2341) at (1/2, 1/6, 1/3); 
\coordinate (2431) at (1/2, 1/3, 1/6);
\coordinate (3421) at (1/6, 1/3, 1/2); 
\coordinate (4321) at (1/6, 1/2, 1/3); 
\coordinate (3412) at (1/6, 1/2, 2/3); 
\coordinate (4312) at (1/6, 2/3, 1/2); 

\draw[thick, draw=blue!70] (1432)--(4132)--(4123)--(1423)--cycle;
\draw[thick, draw=blue!70] (4132)--(1432)--(1342)--(3142)--(3412)--(4312)--(4132);
\draw[dashed, thick, draw=blue!70] (4312)--(4321)--(4231)--(4213)--(4123);
\draw[dashed, thick, draw=blue!70] (3421)--(4321);
\draw[thick, draw=blue!70] (1342)--(1324)--(3124)--(3142);
\draw[dashed, thick, draw=blue!70] (4231)--(2431)--(2413)--(4213);
\draw[thick, draw=blue!70] (1423)--(1243)--(2143);
\draw[dashed, thick, draw=blue!70] (2143)--(2413);
\draw[thick, draw=blue!70] (1324)--(1234)--(1243);
\draw[thick, draw=blue!70] (1234)--(2134)--(2143);
\draw[thick, draw=blue!70] (2314)--(2134);
\draw[dashed, thick, draw=blue!70] (2314)--(2341)--(2431);
\draw[thick, draw=blue!70] (3412)--(3421)--(3241)--(3214)--(3124);
\draw[thick, draw=blue!70] (3214)--(2314);
\draw[dashed,thick, draw=blue!70] (3241)--(2341);

\draw[red line] (1243)--(2143)--(2134)--(2314)--(3214);

\node[label = {[label distance = 0cm]below left:1234}] at (1234) {};
\node[label = {[label distance = 0cm]right:1243}, red node] at (1243) {};
\node[label = {[label distance = 0cm]left:1324}] at (1324) {};
\node[label = {[label distance = 0cm]below:1342}] at (1342) {};
\node[label = {[label distance = 0cm]right:1423}] at (1423) {};
\node[label = {[label distance = -0.2cm]above:1432}] at (1432) {};
\node[label = {[label distance = 0cm]right:2134}, red node] at (2134) {};
\node [label = {[label distance = 0cm]right:2143}, red node] at (2143) {};
\node[label = {[label distance = -0.2cm]below:2314}, red node] at (2314) {};
\node[label = {[label distance = 0cm]right:2341}] at (2341) {};
\node[label = {[label distance = 0cm]left:2413}] at (2413) {};
\node [label = {[label distance = -0.1cm]above:2431}] at (2431) {};
\node [label = {[label distance = 0cm]above:3124}] at (3124) {};
\node[label = {[label distance = 0cm]right:3142}] at (3142) {};
\node[label = {[label distance = -0.2cm]below:3214}, red node] at (3214) {};
\node[label = {[label distance = 0cm]left:3241}] at (3241) {};
\node [label = {[label distance = 0cm]left:3412}] at (3412) {};
\node[label = {[label distance = 0cm]left:3421}] at (3421) {};
\node[label = {[label distance = -0.2cm]above:4123}] at (4123) {};
\node[label = {[label distance = -0.2cm]above:4132}] at (4132) {};
\node [label = {[label distance = 0cm]below:4213}] at (4213) {};
\node[label = {[label distance = -0.2cm]above:4231}] at (4231) {};
\node [label = {[label distance = 0cm]left:4312}] at (4312) {};
\node[label = {[label distance = -0.1cm]below right:4321}] at (4321) {};
\end{tikzpicture}
\caption{A minimum-length path between $1243$ and $3214$ in $\Delta_{\mathfrak{S}_4}$.}\label{fig:length}
\end{figure}
\end{example} 

For $u\in W$, $v\le^u w$ means $u^{-1}v\le u^{-1}w$ in the Bruhat order on $W$. A subset $\A$ of a finite Coxeter group $W$ is called a {\it Coxeter matroid} if it satisfies the \emph{Maximality Property}, i.e. for any $u\in W$, there is a unique element $v\in \A$ such that $w\le^u v$ for all $w\in \A$. 

\begin{example}\label{example_not_Coxeter}
Let $\A=\{213, 132\}$ be a subset of $\mathfrak{S}_3$. Since $213 \not\leq 132$ and $132 \not\leq 213$, there is no element $v \in \A$ such that $w \leq^{123} v$ for all $w \in \A$. Hence $\A$ is not a Coxeter matroid. However, one can check that $\A=\{231,321\}$ is a Coxeter matroid of $\mathfrak{S}_3$. 
\end{example}

We recall the characterization of Coxeter matroids in terms of polytopes by Gelfand-Serganova. Let $V$ be the vector space on which the generators of $W$ act as reflections. Let $\Phi$ be the set of roots of $W$. It is a subset of $V$ (see \cite[Section~5.4]{Humphreys90}). A convex polytope $\Delta$ in $V$ is called a \emph{$\Phi$-polytope} if every edge of~$\Delta$ is parallel to a root in $\Phi$. As before, take a point $\nu$ of $V$ which is not fixed by any reflection in $W$. Then, for a subset $\A$ of $W$, we define $\Delta_\A$ to be the convex hull of the $\A$-orbit $\{w\cdot \nu\mid w\in \A\}$ of the point $\nu$ in $V$, so $\Delta_{\A}$ lies in the $W$-permutohedron~$\Delta_W$. 

The following is a part of the Gelfand--Serganova theorem (see \cite[Theorem~6.3.1]{bo-ge-wh03}).

\begin{theorem}[Gelfand--Serganova]\label{thm_GS}
A subset $\A$ of a finite Coxeter group $W$ is a Coxeter matroid if and only if $\Delta_\A$ is a $\Phi$-polytope. 
\end{theorem}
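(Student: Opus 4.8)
The plan is to translate the combinatorial definition of a Coxeter matroid into a statement about maximizing linear functionals over the polytope $\Delta_\A$, and then to extract the $\Phi$-polytope property from the local behaviour of that maximization across the edges of $\Delta_\A$. First I would normalize $\nu$ to lie in the interior of the fundamental Weyl chamber and set up a dictionary between the partial order $\le^u$ and generic linear functionals: for $u\in W$, let $\xi$ be a generic functional whose maximum over the permutohedron $\Delta_W$ is attained at the vertex $u\cdot\nu$ (this pins down the Weyl chamber of $\xi$). The key lemma is the monotonicity statement that $v\le^u w$ implies $\langle\xi,v\cdot\nu\rangle\ge\langle\xi,w\cdot\nu\rangle$; it is proved by using the $W$-action to reduce to the dominant case $u=e$ and then invoking the compatibility of the Bruhat order with a dominant functional, in the same spirit as Lemma~\ref{lemm:inequ}. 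Two consequences follow: whenever $\A$ has a $\le^u$-minimal element it must be the unique $\xi$-maximizing vertex of $\Delta_\A$; and, since this maximizer depends only on the chamber of $\xi$, the normal fan of $\Delta_\A$ coarsens the Coxeter fan.

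For the implication \emph{Coxeter matroid $\Rightarrow$ $\Phi$-polytope}, I would take an arbitrary edge $[v\cdot\nu,\,v'\cdot\nu]$ of $\Delta_\A$ and choose a functional supported exactly on it. Because the normal fan of $\Delta_\A$ coarsens the Coxeter fan, the supporting wall of this edge contains a facet of a Coxeter chamber, so perturbing to the two sides of the wall lands in two adjacent Weyl chambers labelled by $u$ and $u'=us_\alpha$ with $\alpha$ simple, and these perturbations select $v\cdot\nu$ and $v'\cdot\nu$ as maximizers. By the matroid axiom $v$ is the $\le^u$-minimum and $v'$ the $\le^{u'}$-minimum of $\A$, which unwinds to the two Bruhat relations $u^{-1}v\le u^{-1}v'$ and $s_\alpha u^{-1}v'\le s_\alpha u^{-1}v$. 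Writing $a=u^{-1}v$ and $b=u^{-1}v'$, the lifting property of the Bruhat order with respect to the simple reflection $s_\alpha$ forces $b=s_\alpha a$; hence $v'=s_{u\alpha}v$ and the edge direction $v\cdot\nu-v'\cdot\nu$ is parallel to the root $u\alpha$. Thus every edge of $\Delta_\A$ is root-parallel.

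For the converse, \emph{$\Phi$-polytope $\Rightarrow$ Coxeter matroid}, I fix $u$, let $v$ be the unique $\xi$-maximizing vertex of $\Delta_\A$ for a generic $\xi$ in the chamber of $u$, and show that $v$ is a genuine $\le^u$-minimum, i.e. $v\le^u w$ for every $w\in\A$. Starting from $w\cdot\nu$, I would travel to $v\cdot\nu$ along an edge-path of $\Delta_\A$ on which $\langle\xi,\cdot\rangle$ strictly increases, which exists by convexity. Since $\Delta_\A$ is a $\Phi$-polytope, each step runs from some $z\cdot\nu$ to $zs_\beta\cdot\nu$ with $s_\beta$ a reflection; the two endpoints differ by a reflection and are therefore Bruhat-comparable, so monotonicity identifies the $\xi$-larger one as the $\le^u$-smaller, giving $u^{-1}(zs_\beta)\le u^{-1}z$. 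Concatenating the steps of the path yields $u^{-1}v\le u^{-1}w$, that is $v\le^u w$; uniqueness of $v$ is automatic since a generic functional has a unique maximizer. This verifies the matroid axiom, and combined with the forward direction proves the theorem.

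The main obstacle is the local exchange analysis that sits at the centre of both implications. In the forward direction it is the claim that the distinguished elements of two chambers adjacent across a single wall differ by exactly one reflection --- extracted above from the lifting property once the relations $a\le b$ and $s_\alpha b\le s_\alpha a$ are in hand --- and in the backward direction it is the passage from one $\xi$-increasing, root-parallel edge to an honest Bruhat relation. The real subtlety is that $\le^u$ is only a partial order, so functional-optimality does not by itself deliver order-optimality: the $\Phi$-polytope hypothesis is exactly what supplies the missing Bruhat comparabilities through root-parallel edges, while the matroid hypothesis is what forces the supporting walls to be Coxeter walls. The remaining work is routine genericity bookkeeping --- choosing $\nu$ and all auxiliary functionals off the reflection hyperplanes, and arranging each perturbation to cross a single Coxeter wall.
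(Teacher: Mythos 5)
The paper does not actually prove Theorem~\ref{thm_GS}: it quotes the result from \cite[Theorem~6.3.1]{bo-ge-wh03} and uses it as a black box, so there is no internal proof to compare yours against. Judged on its own merits, your argument is sound and follows the classical Gelfand--Serganova line of reasoning (essentially the one in that reference): the dictionary between $\le^u$ and generic functionals in the chamber of $u$ rests on the standard fact that $a\le b$ in Bruhat order implies $a\cdot\nu-b\cdot\nu$ is a nonnegative combination of positive roots for dominant regular $\nu$; the forward direction correctly passes through the observation that the matroid axiom forces the normal fan of $\Delta_\A$ to coarsen the Coxeter fan, so that each edge's normal cone contains a Coxeter wall, and the two wall-adjacent minima are then pinned down by the exchange step; the backward direction correctly exploits that all points of $\A\cdot\nu$ lie on a sphere (hence are genuinely vertices of $\Delta_\A$, a point worth stating since otherwise the edge-path from $w\cdot\nu$ need not exist), that root-parallel edges between equal-norm orbit points are reflection-related, and that reflection-related elements are always Bruhat-comparable.

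Two steps deserve to be written out in full if this sketch were to be completed. First, the exchange claim at the heart of the forward direction --- that $a\le b$, $s_\alpha b\le s_\alpha a$ and $a\ne b$ force $b=s_\alpha a$ --- is true but is not a one-line citation of the lifting property; it needs a short case analysis (if $s_\alpha b>b$ the lifting property applied to $b\le s_\alpha a$ yields $b\le a$, a contradiction; if $s_\alpha b<b$ it yields $s_\alpha a\le b$ and, when $s_\alpha a>a$, also $a\le s_\alpha b$, after which length counting leaves $b=s_\alpha a$ as the only possibility). Second, the assertion that a codimension-one normal cone of $\Delta_\A$, being a finite union of Coxeter faces, must contain a codimension-one Coxeter face whose relative interior meets its own relative interior is the precise form of the ``wall-crossing'' genericity you defer; it is routine but is exactly where the coarsening hypothesis gets used, so it should not be left implicit.
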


\begin{remark}
The $W$-permutohedron $\Delta_W$ is a $\Phi$-polytope and moreover any root in $\Phi$ is parallel to some edge of $\Delta_W$. 
\end{remark}

\begin{example}\label{example_polytope_and_matroid}
Using Theorem~\ref{thm_GS}, one can check that subsets $\{123,213,132,312\}$ and $\{231,321\}$ of $\mathfrak{S}_3$ are Coxeter matroids while a subset $\{213,132\}$ of $\mathfrak{S}_3$ is not a Coxeter matroid since the edge joining the vertices $213$ and $132$ is not parallel to any root in $\Phi=\{\pm\alpha_1,\pm\alpha_2,\pm(\alpha_1+\alpha_2)\}$ (see Figure~\ref{fig:M} and Example~\ref{example_not_Coxeter}).
\end{example}

\begin{figure}[htb]
\centering
\begin{subfigure}[b]{.35\textwidth}
\centering
\begin{tikzpicture}[scale=.8]
\fill[fill=blue!30] (0,1)--(-{sqrt(3)}/2,1/2)--(-{sqrt(3)}/2,-1/2)--(0,-1)--cycle;
\draw[thick,gray,->] (0,0)--({sqrt(3)},-1);
\draw ({sqrt(3)},-1) node[right]{$\alpha_1$};
\draw[thick,gray,->] (0,0)--(0,2);
\draw (0,2) node[above]{$\alpha_2$};
\draw (2,0) node[right]{$s_2$};
\draw (2,0) node{$\updownarrow$};
\draw (1, {sqrt(3)+2/10}) node[right]{$s_1$};
\draw[<->] ({1+sqrt(3)/10},{sqrt(3)-1/10})--({1-sqrt(3)/10},{sqrt(3)+1/10});
\draw[thick] (1,{sqrt(3)})--(-1,-{sqrt(3)});
\draw[thick] (-1,{sqrt(3)})--(1,-{sqrt(3)});
\draw[thick] (2,0)--(-2,0);
\draw (0, 1.3) node{\footnotesize{$s_2s_1\nu$}};
\draw (0, -1.3) node{\footnotesize{$s_1\nu$}};
\draw (-1.2,-0.7) node{\footnotesize{$\nu$}};
\draw (-1.3,0.7) node{\footnotesize{$s_2\nu$}};
\filldraw[blue] (0,1) circle(2pt);
\filldraw[blue] (0,-1) circle(2pt);
\filldraw[blue] (-{sqrt(3)}/2,1/2) circle(2pt);
\filldraw[blue] (-{sqrt(3)}/2,-1/2) circle(2pt);
\draw[thick,blue] (0,1)--(-{sqrt(3)}/2,1/2)--(-{sqrt(3)}/2,-1/2)--(0,-1)--cycle;
\end{tikzpicture}
\subcaption{$\A=\{123,213,132,312\}$.}\label{fig_M2}
\end{subfigure}~
\begin{subfigure}[b]{.35\textwidth}
\centering
\begin{tikzpicture}[scale=.8]
\draw[thick,gray,->] (0,0)--({sqrt(3)},-1);
\draw ({sqrt(3)},-1) node[right]{$\alpha_1$};
\draw[thick,gray,->] (0,0)--(0,2);
\draw (0,2) node[above]{$\alpha_2$};
\draw (2,0) node[right]{$s_2$};
\draw (2,0) node{$\updownarrow$};
\draw (1, {sqrt(3)+2/10}) node[right]{$s_1$};
\draw[<->] ({1+sqrt(3)/10},{sqrt(3)-1/10})--({1-sqrt(3)/10},{sqrt(3)+1/10});
\draw[thick] (1,{sqrt(3)})--(-1,-{sqrt(3)});
\draw[thick] (-1,{sqrt(3)})--(1,-{sqrt(3)});
\draw[thick] (2,0)--(-2,0);
\draw (1.2,-0.7) node{\footnotesize{$s_1s_2\nu$}};
\draw (1.3,0.7) node{\footnotesize{$s_1s_2s_1\nu$}};
\filldraw[blue] ({sqrt(3)/2},1/2) circle(2pt);
\filldraw[blue] ({sqrt(3)/2},-1/2) circle(2pt);
\draw[thick,blue] ({sqrt(3)/2},1/2)--({sqrt(3)/2},-1/2);
\end{tikzpicture}
\subcaption{$\A=\{231,321\}$.}\label{fig_M3}
\end{subfigure}~
\begin{subfigure}[b]{.3\textwidth}
\centering
\begin{tikzpicture}[scale=.8] 
\draw[thick,gray,->] (0,0)--({sqrt(3)},-1);
\draw ({sqrt(3)},-1) node[right]{$\alpha_1$};
\draw[thick,gray,->] (0,0)--(0,2);
\draw (0,2) node[above]{$\alpha_2$};
\draw (2,0) node[right]{$s_2$};
\draw (2,0) node{$\updownarrow$};
\draw (1, {sqrt(3)+2/10}) node[right]{$s_1$};
\draw[<->] ({1+sqrt(3)/10},{sqrt(3)-1/10})--({1-sqrt(3)/10},{sqrt(3)+1/10});
\draw[thick] (1,{sqrt(3)})--(-1,-{sqrt(3)});
\draw[thick] (-1,{sqrt(3)})--(1,-{sqrt(3)});
\draw[thick] (2,0)--(-2,0);
\draw (0, -1.3) node{\footnotesize{$s_1\nu$}};
\draw (-1.3,0.7) node{\footnotesize{$s_2\nu$}};
\filldraw[red] (0,-1) circle(2pt);
\filldraw[red] (-{sqrt(3)}/2,1/2) circle(2pt);
\draw[thick,red] (-{sqrt(3)}/2,1/2)--(0,-1);
\end{tikzpicture}
\subcaption{$\A=\{213,132\}$.}\label{fig_M1}
\end{subfigure}
\caption{Examples of $\Delta_{\A}$.} \label{fig:M}
\end{figure}

Since the multiplication by the longest element of a finite Coxeter group $W$ reverses the Bruhat order on $W$, the Maximality Property is equivalent to the Minimality Property, that is, for each $u\in W$, there exists a unique minimal element in $\A$ with respect to $\leq^{u}$. We denote the unique minimal element by $\mret_\A(u)$, so 
\begin{equation} \label{eq:rcoxeter}
\text{$\mret_\A(u)\le^u w$\quad for all $w\in \A$}.
\end{equation}
Since $\mret_\A(u)=u$ for $u\in \A$, the map 
$$\mret_\A\colon W\to \A\subset W$$ is a retraction of $W$ onto $\A$, which we call a \emph{matroid retraction}. 

\begin{proposition} \label{prop_mret_minimal_distance}
If $\A$ is a Coxeter matroid of a finite Coxeter group $W$, then 
\begin{enumerate}
\item for each $u\in W$, there is a unique $\y\in \A$ such that $d(u,\y)=d(u,\A)$, and
\item $\y=\mret_{\A}(u)$.
\end{enumerate}
\end{proposition}

\begin{proof}
First we remark that if $v<^u w$, then $d(u,v)<d(u,w)$ for $u,v,w\in W$. Indeed, this follows from the following observation: 
\[
\begin{split}
v<^u w 
\Longleftrightarrow \ &u^{-1}v< u^{-1}w\\
\Longrightarrow \ & d(e,u^{-1}v)< d(e,u^{-1}w)\\
\Longleftrightarrow \ & d(u,v)< d(u,w),
\end{split}
\]
where $e$ denotes the identity element of $W$ and the last equivalence follows from the invariance of the metric $d$ under the left multiplication of $W$. 

Since $\mret_{\A}(u)$ is the unique minimal element in $\A$ with respect to the order $\leq^{u}$, it follows from the above observation that $d(u,\mret_{\A}(u))\leq d(u,w)$ for every $w\in\A$ and the equality holds only when $w=\mret_{\A}(u)$. Hence $\mret_{\A}(u)$ is the unique element in $W$ closest to $u$. This proves the proposition.
\end{proof}

\begin{example}
We observed in Example~\ref{example_not_Coxeter} or~\ref{example_polytope_and_matroid} that $\A=\{213, 132\}$ is not a Coxeter matroid of $\mathfrak{S}_3$. In this case, both $213$ and $132$ are located at the same distance from $123$, so Proposition~\ref{prop_mret_minimal_distance} (1) is not satisfied. 
\end{example}
Proposition~\ref{prop_mret_minimal_distance} (1) is a necessary condition for a subset $\A$ of $W$ to be a Coxeter matroid but it is not a sufficient condition. We discuss this issue in Section~\ref{sec:characterization}.

%%%%%
\section{Torus orbit closures in flag varieties} \label{sect:torus-orbit}

Let $G$ be a semisimple algebraic group, $B$ a Borel subgroup of $G$, and $T$ a maximal torus of $G$ contained in $B$. The left multiplication of $T$ on $G$ induces an action of $T$ on the flag variety $G/B$ and the $T$-fixed point set $(G/B)^T$ of $G/B$ can be identified with the Weyl group $W$ of $G$. 
For a point $x\in G/B$, the closure $Y\coloneqq \overline{T\cdot x}$ of its $T$-orbit $T\cdot x$ is a (possibly non-normal) toric variety in $G/B$. 
In this section, we define a geometric retraction $\gret_Y$ of the Weyl group $W$ onto the $T$-fixed point set $Y^T \subset W$ of $Y$ by using the Orbit-Cone correspondence in toric variety. 
It is known that $Y^T$ is a Coxeter matroid of $W$. 
We show that $\gret_Y=\mret_{Y^T}$ for every torus orbit closure $Y$ in $G/B$. 

We think of the real Lie algebra $\mathfrak{t}_\R$ of the torus $T$ as $\Hom(\C^{\ast},T)\otimes \R$ and the lattice $\mathfrak{t}_\Z$ of $\mathfrak{t}_\R$ as $\Hom(\C^{\ast},T)$, where $\Hom(\C^*,T)$ denotes the group of algebraic homomorphisms from $\C^*=\C\backslash\{0\}$ to $T$. The vector space $\mathfrak{t}_\R^{\ast}$ dual to $\mathfrak{t}_\R$ can be thought of as $\Hom(T,\C^{\ast})\otimes \R$ and the set $\Phi$ of roots of $G$ is a finite subset of $\Hom(T,\C^{\ast})=\mathfrak{t}_\Z^{\ast}$. The Weyl group $W$ of $G$ acts on $\mathfrak{t}_\R$ as the adjoint action and on its dual space $\mathfrak{t}^{\ast}_\R$ as the coadjoint action, i.e. 
\[
\text{$(w\cdot f)(x)\coloneqq f(\Ad_{w^{-1}}(x))$\quad for $w\in W$, $f\in \mathfrak{t}^{\ast}_\R$ and $x\in \mathfrak{t}_\R$.}
\]
The Borel subgroup $B$ determines the set $\Phi^+$ of positive roots and we have
\begin{equation} \label{eq:tangent}
\quad T_{u}(G/B)=\bigoplus_{\alpha\in \Phi^+}\mathfrak{g}_{-u(\alpha)}\quad \text{as $T$-modules for $u\in W$,}
\end{equation}
where $\mathfrak{g}_{\alpha}$ denotes the eigensubspace of $\mathfrak{g}$ for $\alpha\in \Phi$ (see, for example,~\cite[\S3]{GHZ06_GKM}). 

For each $u\in W$, we define 
\begin{equation} \label{eq:cone}
C(u)=\{ \lambda\in \mathfrak{t}_\R\mid \langle u(\alpha),\lambda\rangle \le 0 \quad\text{for all simple roots $\alpha$}\}
\end{equation}
where $\langle \ ,\ \rangle$ denotes the natural pairing between $\mathfrak{t}_\R^{\ast}$ and $\mathfrak{t}_\R$.
The interiors of the cones $C(u)$ above form the Weyl chambers. The identity in~\eqref{eq:tangent} implies that for any $\lambda_u\in \Int (C(u))\cap \mathfrak{t}_\Z$, we have 
\begin{equation} \label{eq:lambda}
(G/B)^{\lambda_u(\C^{\ast})}=(G/B)^T.
\end{equation}

For each $w\in W=(G/B)^T$, we choose an element $\lambda_w\in \Int (C(w))\cap \mathfrak{t}_\Z$ and define 
\[
S_w\coloneqq \left\{ x\in G/B \,\middle|\, \lim_{t\to 0} \lambda_w(t)\cdot x=w \right\},
\]
which is independent of the choice of $\lambda_w$. Then $S_w$ is a $T$-invariant affine open subset of $G/B$ and isomorphic to $T_w(G/B)$ as a $T$-variety (see~\cite{BB73_some}). 

\begin{proposition} \label{lemm:limit_Y}
Let $x$ be a point of $G/B$ and $Y=\overline{T\cdot x}$. For any $u\in W$ and $\lambda_u \in \Int (C(u))\cap \mathfrak{t}_\Z$, the limit point ${\lim_{t\to 0}\lambda_u(t)\cdot x}$ is an element of~$Y^T$ depending only on $u$ and $Y$. Furthermore, if $u\in Y^T$, then $\lim_{t\to 0}\lambda_u(t)\cdot x=u$.
\end{proposition}

\begin{proof}
Since $Y$ is closed, the limit point ${\lim_{t\to 0}\lambda_u(t)\cdot x}$ belongs to $Y$ and clearly remains fixed under the action of $\lambda_u(\C^{\ast})$. Therefore, the limit point is indeed in $Y^T$ by~\eqref{eq:lambda}. Denote the limit point by $w$. Since $\lambda_u(t)\cdot x \in S_{w}$ and $S_{w}$ is $T$-invariant, $x$ belongs to $S_{w}$. Since $S_w$ is isomorphic to $T_w(G/B)$ as a $T$-variety, it follows from \eqref{eq:tangent} that ${\lim_{t\to 0}\lambda_u(t)\cdot x}$ is independent of the choice of $\lambda_u\in \Int (C(u))\cap \mathfrak{t}_\Z$. 

If $u\in Y^T$, then $x$ belongs to $S_u$ because otherwise $Y=\overline{T\cdot x}$ does not contain $u$ (note that $S_u$ is a $T$-invariant open subset of $G/B$).  Therefore, $\lim_{t\to 0}\lambda_u(t)\cdot x=u$.
\end{proof}

By Proposition~\ref{lemm:limit_Y}, the map $\gret_{Y}\colon W\to Y^T\subset W$ defined by 
\begin{equation} \label{eq:limit_Y}
\gret_{Y}(u)\coloneqq \lim_{t\to 0}\lambda(t)\cdot x
\end{equation}
is a retraction of $W$ onto $Y^T$, which we call a \emph{geometric retraction}. 

\begin{example}\label{example_fan_Y_SL3}
Take a point 
$x=\begin{pmatrix}1&1&0\\1&0&1\\1&0&0 
\end{pmatrix}B \in \mathrm{SL}_3(\C)/B$ and consider $Y=\overline{T\cdot x}$. One can check that $Y^T=\{123,132,213,312\}$.
Let us choose an element $\lambda = (\lambda_1, \lambda_2, \lambda_3) \in \Int(C(231)) \cap \mathfrak{t}_{\Z}$, that is $\lambda_2 < \lambda_3 < \lambda_1$. Then we have that
\[
\begin{split}
\lambda(t) \cdot \begin{pmatrix}
1 & 1 & 0 \\ 1 & 0 & 1 \\ 1 & 0 & 0
\end{pmatrix}B
&= \begin{pmatrix}
t^{\lambda_1} & 0 & 0 \\
0 & t^{\lambda_2} & 0 \\
0 & 0 & t^{\lambda_3}
\end{pmatrix} \cdot
\begin{pmatrix}
1 & 1 & 1 \\ 1 & 0 & 0 \\ 1 & 0 & 1
\end{pmatrix}B \\
&= \begin{pmatrix}
t^{\lambda_1} & t^{\lambda_1} & t^{\lambda_1} \\ t^{\lambda_2} & 0 & 0\\ t^{\lambda_3} & 0 & t^{\lambda_3}
\end{pmatrix}B \\
&= \begin{pmatrix}
t^{\lambda_1 - \lambda_2} & 1 & t^{\lambda_1 - \lambda_3} \\
1 & 0 & 0 \\
t^{\lambda_3 - \lambda_2} & 0 & 1
\end{pmatrix}B 
\stackrel{t \to 0}{\longrightarrow}
\begin{pmatrix}
0 & 1 & 0 \\ 1 & 0 & 0 \\ 0 & 0 & 1
\end{pmatrix}B.
\end{split}
\]
Therefore we get $\gret_Y(231)=213$. By a similar computation, we obtain Table~\ref{table_ug_1}.
\begin{table}[htb]
\centering
\begin{tabular}{c|cccccc}
\toprule 
$u$ & $123$ & $213$ & $231$ & $321$ & $312$ & $132$ \\
\midrule 
$\gret_Y(u)$ for $Y$ & $123$ & $213$ & $213$ & $312$ & $312$ & $132$ \\
\bottomrule
\end{tabular}

\smallskip
\caption{$\gret_Y(u)$ for $Y = \overline{T\cdot x}$ in Example~\ref{example_fan_Y_SL3}.}\label{table_ug_1}
\end{table}
\end{example}

We make a remark about the fan of $Y$. The following corollary follows from Proposition~\ref{lemm:limit_Y} and the Orbit-Cone correspondence of toric varieties (see \cite[Proposition 3.2.2 \& Theorem 3.A.5]{CLS}). 

\begin{corollary}\textup{(}cf.~\cite[Corollary 3.7]{le-ma18}\textup{)} \label{coro:fan_of_Y}
The maximal cone $C_Y(y)$ corresponding to $y\in Y^T$  in the fan of (the normalization of) $Y$ is given by $\bigcup_{u\in(\gret_Y)^{-1}(y)}C(u)$. 
\end{corollary}

\begin{remark}\label{rmk_normal}
\begin{enumerate}
\item Since the action of $T$ on $Y$ is not effective, the ambient space of the fan of $Y$ is the quotient of $\mathfrak{t}_{\R}$ by the subspace $\Hom(\C^{\ast}, T_Y) \otimes \R$, where $T_Y$ is the toral subgroup of $T$ which fixes $Y$ pointwise. Therefore, to be precise, we need to project the cones $C_Y(y)$ to this quotient space in the corollary above. 

\item When $G$ is of type $A_n$, $D_4$, or $B_2$, every $T$-orbit closure in $G/B$ is normal (\cite[Proposition~4.8]{ca-ku2000}) while when $G = G_2$, non-normal torus orbit closures exist (see~\cite[Example~6.1]{ca-ku2000} and references therein). 
\end{enumerate}
\end{remark}

Now we reformulate the geometric retraction $\gret_Y$ using a Bruhat decomposition of $G/B$. We thank a referee for pointing out this reformulation. It makes the meaning of the geometric retraction more transparent and substitutes our original discussion using GKM graph. 

For $u \in W$, we set $B_u \coloneqq u B^{-}u^{-1}$, where $B^-$ is the opposite Borel subgroup $w_0Bw_0$ and $w_0$ is the longest element of $W$. The Lie algebra of the Borel subgroup $B_u$ is as follows:
\begin{equation}\label{eq_Lie_Bu}
\Lie(B_u) = \mathfrak{t} \oplus \bigoplus_{\alpha \in \Phi^+} \frak{g}_{-u(\alpha)}. 
\end{equation}

With respect to $B_u$, we obtain the following Bruhat decomposition:
\begin{equation} \label{eq:decomposition_of_G/B}
G/B = \bigsqcup_{w \in W} B_u \cdot w B/B
\end{equation}
and set 
\begin{equation} \label{eq:Auw}
A^u_w \coloneqq B_u \cdot w B/B=u\cdot B^{-}u^{-1}wB/B.
\end{equation}
Note that $A^{w_0}_w$ is the Schubert cell $BwB/B$ and $A^e_w$ is the opposite Schubert cell $B^- w B/B$, where $e$ denotes the identity element of $W$. Similarly to (opposite) Schubert cells, we have 
\begin{equation}\label{eq_closure_of_Auw}
\overline{A^u_w} = \bigsqcup_{ w \leq^u v \leq^u u w_0} A^u_v \quad\text{and hence $(\overline{A^u_w})^T = \{ v \in W \mid w \leq^u v \leq^u uw_0\}$. }
\end{equation}
Indeed, since $\overline{B^- wB/B}$ is the disjoint union of opposite Schubert cells indexed by elements $z\in W$ satisfying $w \leq z \leq w_0$, it follows from \eqref{eq:Auw} that 
\[
\begin{split}
\overline{A^u_w} &= u \cdot \overline{B^{-}u^{-1}wB/B} \\
&= u \bigsqcup_{ u^{-1}w \leq z \leq w_0} B^- z B/B \\
&= \bigsqcup_{u^{-1}w\le u^{-1}(uz) \le u^{-1}(uw_0)}u\cdot B^-u^{-1}(uz)B/B\\
&= \bigsqcup_{w \leq^u v \leq^u uw_0} A^u_v,
\end{split}
\]
which shows \eqref{eq_closure_of_Auw}. 

Recall that for each root $\alpha\in \Hom(T, \C^{\ast})$, there exists a root homomorphism $x_{\alpha} \colon \C \to G$ satisfying
\begin{equation}\label{eq_root_homomorphism}
s x_{\alpha}(a) s^{-1} = x_{\alpha}(\alpha(s) a)
\end{equation}
for any $s \in T$ and $a \in \C$, and moreover, $d x_{\alpha} \colon \C \stackrel{\sim}{\longrightarrow} \mathfrak{g}_{\alpha}$. We call the image of $x_{\alpha}$ \textit{the root subgroup} of $G$ and denote it by $U_{\alpha}$.

\begin{proposition}\label{prop1}
Let $x$ be a point of $G/B$ and $Y=\overline{T\cdot x}$. Then $x \in A^u_w$ if and only if $\gret_Y(u) = w$.
\end{proposition}
\begin{proof}
Since the flag variety $G/B$ is the disjoint union of $A^u_w$ over $w\in W$ by \eqref{eq:decomposition_of_G/B} and \eqref{eq:Auw}, it suffices to prove the `only if' part.

Let $\Phi^+ = \{\beta_1,\dots,\beta_N\}$ and consider $\mathfrak{u}_u = \bigoplus_{\beta_j \in \Phi^+} \frak{g}_{-u(\beta_j)} \subset \mathfrak{g}$. The unipotent subgroup $U_u:=\exp(\mathfrak{u}_u)$ of $G$ is given by 
\[
U_u = \exp(\mathfrak{u}_u)=U_{-u(\beta_1)}U_{-u(\beta_2)}\cdots U_{-u(\beta_N)}.
\] 
By~\eqref{eq_Lie_Bu}, we have $B_u = U_u \rtimes T$, so that $A^u_w = U_u wB/B$. Therefore, an element $x \in A^u_w$ can be written as $x = u_1 \cdots u_N wB$ for some $u_j \in U_{-u(\beta_j)}$.

For $\lambda_u \in \Int(C(u)) \cap \mathfrak{t}_{\Z}$ and $t \in \C^{\ast}$, we get $\alpha(\lambda_u(t)) = t^{\langle \alpha, \lambda_u \rangle}$. Accordingly, for any positive root $\alpha \in \Phi^+$ and $a \in \C$, we
have that
\begin{equation}\label{eq_lim_lambda_us}
\begin{split}
\lim_{t \to 0}
\lambda_u(t) x_{-u(\alpha)}(a) \lambda_u(t)^{-1} 
&= \lim_{t \to 0} x_{-u(\alpha)} ( -u(\alpha)(\lambda_u(t)) a) \quad \text{(by~\eqref{eq_root_homomorphism})}\\
&= \lim_{t \to 0} x_{-u(\alpha)}(t^{\langle -u(\alpha), \lambda_u \rangle} a) \\
&= x_{-u(\alpha)}(0) = I, 
\end{split}
\end{equation}
where $I$ is the identity element of $G$ and the third equality above follows from the definition of $C(u)$ in~\eqref{eq:cone}. Noting that $wB$ is a $T$-fixed point and using~\eqref{eq_lim_lambda_us}, we get the following for $x = u_1 \cdots u_N wB \in A^u_w$: 
\begin{equation*}
\begin{split}
\lim_{t \to 0} \lambda_u(t) \cdot x 
&= \lim_{t \to 0} \lambda_u(t) \cdot u_1 \cdots u_N wB \\
&= \lim_{t \to 0} (\lambda_u(t) u_1 \lambda_u(t)^{-1})
(\lambda_u(t) u_2 \lambda_u(t)^{-1})\cdots(\lambda_u(t) u_N \lambda_u(t)^{-1}) \lambda_u(t) \cdot w B \\
&= \lim_{t \to 0} (\lambda_u(t) u_1 \lambda_u(t)^{-1})
(\lambda_u(t) u_2 \lambda_u(t)^{-1})\cdots(\lambda_u(t) u_N \lambda_u(t)^{-1}) \cdot w B \\
&= I \cdot wB = wB.
\end{split}
\end{equation*}
This proves that if $x \in A^u_w$, then $\gret_Y(u) = w$. 
\end{proof}

\begin{example}
Let $x = \begin{pmatrix} 1 & 1 & 0 \\ 1 & 0 & 1 \\ 1 & 0 & 0 \end{pmatrix}B\in \SL_3(\C)/B$. 
Take $u = 231\in \mathfrak{S}_3$. Then the corresponding Borel subgroup $B_u$ is given by 
\[
B_u = u B^- u^{-1} 
= \begin{pmatrix}
0 & 0 & 1 \\1 & 0 & 0 \\ 0 & 1 & 0
\end{pmatrix}
\begin{pmatrix} \star & 0 & 0 \\ * & \star & 0 \\ * & * & \star \end{pmatrix}
\begin{pmatrix} 0 & 1 & 0 \\ 0 & 0 & 1 \\ 1 & 0 & 0 \end{pmatrix}
= \begin{pmatrix}
\star & * & * \\ 0 & \star & 0 \\ 0 & * & \star
\end{pmatrix}.
\]
Here, $* \in \C$ and $\star \in \C^{\ast}$.
For $w = 213\in\mathfrak{S}_3$, the cell $A^u_w$ is given by 
\[
A^u_w = B_u w B/B = \begin{pmatrix}
\star & * & * \\ 0 & \star & 0 \\ 0 & * & \star
\end{pmatrix} \begin{pmatrix}
0 & 1 & 0 \\ 1 & 0 & 0 \\ 0 & 0 & 1
\end{pmatrix} B 
= \begin{pmatrix}
* & \star & * \\ \star & 0 & 0 \\ * & 0 & \star 
\end{pmatrix}B
= \begin{pmatrix}
* & 1 & 0 \\ 1 & 0 & 0 \\ * & 0 & 1
\end{pmatrix} B.
\]
Since 
\begin{equation}\label{eq_computation_x_in_Auw}
\begin{pmatrix}
1 & 1 & 0 \\ 1 & 0 & 0 \\ 1 & 0 & 1
\end{pmatrix}B
= \begin{pmatrix}
1 & 1 & -1 \\ 1 & 0 & -1 \\ 1 & 0 & 0
\end{pmatrix}B 
= \begin{pmatrix}
1 & 1 & 0 \\ 1 & 0 & 1 \\ 1 & 0 & 0
\end{pmatrix}B,
\end{equation}
we see that the element $x$ is contained in $A^{231}_{213}$. 

Take $\lambda_u = (\lambda_1, \lambda_2, \lambda_3) \in \Int(C(u))\cap \mathfrak{t}_{\Z}$, i.e. $\lambda_2 < \lambda_3 < \lambda_1$. 
Using \eqref{eq_computation_x_in_Auw}, we obtain
\[
x = \begin{pmatrix}
1 & 1 & 0 \\ 1 & 0 & 0 \\ 1 & 0 & 1
\end{pmatrix}B = \underbrace{\begin{pmatrix}
1 & 1 & 0 \\ 0 & 1 & 0 \\ 0 & 1 & 1
\end{pmatrix}}_{\in B_u}
\begin{pmatrix}
0 & 1 & 0 \\ 1 & 0 & 0 \\ 0 & 0 & 1
\end{pmatrix}B.
\]
Then we have 
\[
\begin{split}
\lim_{t \to 0} \lambda_u(t)\cdot x 
&= \lim_{t \to 0} \begin{pmatrix}
t^{\lambda_1} & 0 & 0 \\ 0 & t^{\lambda_2} & 0 \\ 0 & 0 & t^{\lambda_3}
\end{pmatrix}
\begin{pmatrix}
1 & 1 & 0 \\ 0 & 1 & 0 \\ 0 & 1 & 1
\end{pmatrix}
\begin{pmatrix}
0 & 1 & 0 \\ 1 & 0 & 0 \\ 0 & 0 & 1
\end{pmatrix}B \\
&= \lim_{t \to 0}
\begin{pmatrix}
t^{\lambda_1} & t^{\lambda_1} & 0 \\
0 & t^{\lambda_2} & 0 \\
0 & t^{\lambda_3} & t^{\lambda_3} 
\end{pmatrix}\begin{pmatrix}
0 & 1 & 0 \\ 1 & 0 & 0 \\ 0 & 0 & 1
\end{pmatrix}B \\
&= \lim_{t \to 0}
\begin{pmatrix}
1 & t^{\lambda_1 - \lambda_2} & 0 \\
0 & 1 & 0 \\ 0 & t^{\lambda_3 - \lambda_2} & 1 
\end{pmatrix}
\begin{pmatrix}
t^{\lambda_1} & 0 & 0 \\ 0 & t^{\lambda_2} & 0 \\ 0 & 0 & t^{\lambda_3}
\end{pmatrix}\begin{pmatrix}
0 & 1 & 0 \\ 1 & 0 & 0 \\ 0 & 0 & 1
\end{pmatrix}B \\
&= \lim_{t \to 0} 
\begin{pmatrix}
1 & t^{\lambda_1 - \lambda_2} & 0 \\
0 & 1 & 0 \\ 0 & t^{\lambda_3 - \lambda_2} & 1 
\end{pmatrix}
\begin{pmatrix}
0 & 1 & 0 \\ 1 & 0 & 0 \\ 0 & 0 & 1
\end{pmatrix}B \\ 
&= \begin{pmatrix}
0 & 1 & 0 \\ 1 & 0 & 0 \\ 0 & 0 & 1
\end{pmatrix}B.
\end{split}
\] 
Accordingly, for $Y = \overline{T\cdot x}$, we obtain that $\gret_Y(231) = 213$. 
\end{example}

The $T$-fixed point set $Y^T$ of a $T$-orbit closure $Y$ in the flag variety $G/B$ is known to be a Coxeter matroid of the Weyl group $W$ of $G$ (\cite{GelfandSerganova87}), so we have the matroid retraction $\mret_{Y^T}\colon W\to Y^T\subset W$. 

\begin{theorem}\label{theo:torus-coxeter}
Let $G$ be a semisimple algebraic group over $\C$, $B$ a Borel subgroup of $G$, and $T$ a maximal torus of $G$ contained in $B$. Then $\gret_Y=\mret_{Y^T}$ for any $T$-orbit closure $Y$ in $G/B$. 
\end{theorem}
\begin{proof}
Let $x$ be a point of $G/B$ such that $Y=\overline{T\cdot x}$.
For $u\in W$, let $\mathcal{R}_{Y}^{g}(u)=w$. By Proposition~\ref{prop1}, $T\cdot x \subseteq A^{u}_{w}$ and hence $Y=\overline{T\cdot x}\subseteq \overline{A^{u}_{w}}$. Therefore, we have
\[
Y^T\subset \overline{A^u_w}^T=\{ v\in W\mid w\le^u v \le^u uw_0\},
\]
where the equation above follows from \eqref{eq_closure_of_Auw}. Hence $w$ is the unique minimal element in $Y^T$ with respect to the ordering $\leq^u$. Therefore, $\gret_Y=\mret_{Y^{T}}$.
\end{proof}

\begin{remark}
The above proof gives an alternative proof of $Y^T$ being a Coxeter matroid. 
\end{remark}

\section{Representability of Coxeter matroids} \label{sect:representability}

When $W$ is the Weyl group of a semisimple algebraic group $G$, a Coxeter matroid~$\A$ of $W$ is said to be \emph{representable} if $\A$ can be realized as the $T$-fixed point set of a $T$-orbit closure in $G/B$. See~\cite[\S 1.7.5, \S 3.6.2, \S 3.10.3]{bo-ge-wh03}. In this section we discuss the representability of Coxeter matroids when $G=\SL_n(\C)$, so $W=\mathfrak{S}_n$. 

A computer check shows that any Coxeter matroid of $\mathfrak{S}_n$ is representable when $n\le 4$. On the other hand, there exists a non-representable Coxeter matroid of $\mathfrak{S}_n$ when $n=7$, which we shall explain in the following. 

Let $x\in\SL_n(\C)/B$ and choose a representative $\tilde{x}\in \SL_n(\C)$ of $x$. For a sequence $\underline{i} = (i_1,\dots,i_d)$ such that $1 \leq i_1 < i_2 < \cdots < i_d \leq n$, we define $p_{\underline{i}}(\tilde{x})$ to be the $d \times d$ minor of $\tilde{x}$ with row indices $i_1,\dots,i_d$ and the column indices $1,\dots,d$. Since the non-vanishing of $p_{\underline{i}}(\tilde{x})$ is independent of the choice of the representative $\tilde{x}$ of $x$, we denote the following set by $I_d(x)$: 
\[
I_d(x) := \{ \underline{i} = (i_1,\dots,i_d) \mid p_{\underline{i}}(\tilde{x}) \neq 0\} \quad 1 \leq d \leq n.
\]

\begin{proposition}[{\cite[Proposition~1 in \S5.2]{GelfandSerganova87}}]\label{proposition_GS_fixed_points}
For an element $x \in \SL_n(\C)/B$, we have that
\[
(\overline{T\cdot x})^T = \{w \in \mathfrak{S}_n \mid \{w(1),\dots,w(d)\}\!\!\uparrow~ \in I_d(x)\quad \text{for all } 1\leq d \leq n\}.
\]
Here, $\{a_1,\dots,a_d\}\!\!\uparrow$ is the ordered $d$-tuple obtained from $\{a_1,\dots,a_d\}$ by arranging its elements in ascending order.
\end{proposition}

\begin{example}
When $x$ is the element of $\SL_3(\C)/B$ in Example~\ref{example_fan_Y_SL3}, we have that
\begin{gather*}
I_1(x) = \{(1), (2), (3) \}, \quad I_2(x) = \{(1,2),(1,3) \}, \quad I_3(x) = \{(1,2,3)\}, \text{ and }\\
\{123,132, 213, 312\} = \{w \in \mathfrak{S}_3 \mid \{w(1),\dots,w(d)\}\!\!\uparrow~ \in I_d(x) \quad \text{ for all }1 \leq d \leq 3\}.
\end{gather*}
\end{example}

Using Proposition~\ref{proposition_GS_fixed_points}, one can find a non-representable Coxeter matroid of $\mathfrak{S}_7$ as follows. 
Let 
\[
A = \{\{1,2,4\}, \{1,3,5\}, \{1,6,7\}, \{2,3,6\}, \{2,5,7\}, \{3,4,7\}, \{4,5,6\}\}
\] 
be a collection of subsets of $\{1,2,\ldots,7\}$. Each subset corresponds to one of the seven lines on the Fano plane in Figure~\ref{fig_Fano}. Define a subset $\A$ of $\mathfrak{S}_7$ by
\[
\A = \{w \in \mathfrak{S}_7 \mid \{w(1), w(2), w(3)\} \notin A\}.
\] 
Then $\A$ is a Coxeter matroid obtained from the Fano matroid using Higgs lifts (see~\cite[\S1.7]{bo-ge-wh03} for the definition of Higgs lift).
Suppose that $\A$ is the $T$-fixed point set of a $T$-orbit closure in $\SL_7(\C)/B$. Then, by Proposition~\ref{proposition_GS_fixed_points}, there is an element $x$ in $\SL_7(\C)/B$ such that $$\A = \{w\in \mathfrak{S}_7\mid \{w(1),\ldots,w(d)\}\!\!\uparrow \in I_d(x)\text{ for all }1\leq d\leq 7\}.$$ However, it is known in~\cite[\S16]{Whitney} and easy to check that there is no $7\times 3$ matrix of rank~$3$ whose three rows $v_{j_1}, v_{j_2}, v_{j_3}$ are linearly independent if and only if $\{j_1, j_2, j_3\} \notin A$. Therefore, $\A$ cannot be obtained as $Y^T$ for a $T$-orbit closure $Y$ in $\SL_7(\C)/B$.
\begin{figure}[h]
\begin{tikzpicture}
\draw (0:0) circle (1cm);
\node[fill=white, draw, circle] at (210:2) (1) {$1$};
\node[fill=white, draw, circle] at (90:2) (2) {$2$};
\node[fill=white, draw, circle] at (330:2) (3) {$3$};
\node[fill=white, draw, circle] at (150:1) (4) {$4$};
\node[fill=white, draw, circle] at (270:1) (5) {$5$};
\node[fill=white, draw, circle] at (30:1) (6) {$6$} ;
\node[fill=white, draw, circle] at (0:0) (7) {$7$};
\draw (1)--(5);
\draw (5)--(3);
\draw (3)--(6);
\draw (6)--(2);
\draw (2)--(4);
\draw (4)--(1);
\draw(1)--(7);
\draw (7)--(6);
\draw(5)--(7);
\draw (7)--(2);
\draw (3)--(7);
\draw (7)--(4);
\end{tikzpicture}
\caption{The Fano plane.}\label{fig_Fano}
\end{figure}

When $G$ is of Lie type $A$, torus orbit closures associated to Schubert varieties
and Richardson varieties were studied in \cite{le-ma18} and \cite{LMP_BIP}, respectively.

\begin{remark}
Let $v$ and $w$ be elements in $\mathfrak{S}_{n}$ with $v\leq w$ in Bruhat order. Then we have the Richardson variety $X^v_w\coloneqq X_w\cap w_0 X_{w_0v}$ in the flag variety $\SL_{n}(\C)/B$, where $X_w \coloneqq \overline{BwB/B}$ is the Schubert variety associated to $w \in \mathfrak{S}_{n}$ and $w_0$ is the longest element of $\mathfrak{S}_n$. Note that $X^v_w$ is $T$-invariant and $(X^v_w)^T$ is the Bruhat interval 
\[
[v,w] \coloneqq \{z\in \mathfrak{S}_{n}\mid v\leq z\leq w\}. 
\]
One can see that there exists a point $x\in X^v_w$ such that 
\[
(\overline{T\cdot x})^T= (X^v_w)^{T}(=[v,w]).
\]
The existence of such a point $x$ is proven in~\cite[Proposition~3.8]{le-ma18} when $X^v_w$ is a Schubert variety, i.e. $v=e$ and a similar argument works for any Richardson variety $X^v_w$. Therefore, every Bruhat interval is a representable Coxeter matroid.
\end{remark}

%%%%
\section{Algebraic retractions} \label{sect:Retraction on the Weyl group}

In this section, we define the third retraction $\aret_\A$ of $W$ onto $\A$ algebraically when $W$ is a product $\prod_{j=1}^m W_j$ of Weyl groups $W_j$'s of classical Lie types and $\A=\prod_{j=1}^m\A_j$ where each $\A_j$ is an \emph{arbitrary} subset of $W_j$. In general, $\aret_\A(u)$ is not necessarily an element of $\A$ closest to $u\in W$, but it tunrs out that $\aret_{\A}=\mret_\A$ when $\A$ is a Coxeter matroid, so that $\aret_{\A}$ produces the closest elements when $\A$ is a Coxeter matroid by Proposition~\ref{prop_mret_minimal_distance}.

The Weyl group $W$ of classical Lie type is of the following form: 
\[
W=\begin{cases} \mathfrak{S}_n \qquad &\text{if $W$ is of type $A_{n-1}$},\\
(\Z/2\Z)^n\rtimes \mathfrak{S}_n \qquad &\text{if $W$ is of type $B_n$ or $C_n$},\\
(\Z/2\Z)^{n-1}\rtimes \mathfrak{S}_n\qquad &\text{if $W$ is of type $D_n$}.
\end{cases}
\]
We denote the set $\{\bar{1},\dots,\bar{n}\}$ by $[\overline{n}]$ and regard $\bar{\bar{i}}=i$.
In each type we will use one-line notation for $u\in W$, i.e.
\[
u=u(1)u(2)\cdots u(n)
\]
where $u(i)\in [n]\cup[\bar{n}]$ and $u(1)u(2)\cdots u(n)$ is a permutation on $[n]$ if we forget the bars. 
There is no bar in type $A$ and the number of bars in $u(1),\dots,u(n)$ is even (possibly zero) in type $D$. 
In types $B$, $C$ and $D$, we have $u(\bar{i})=\overline{u(i)}$.

For $u\in W$, let us define a linear order $\prec^u$ on the set $[n]\cup[\overline{n}]$ by
\begin{equation}
u(1)\prec^u \dots \prec^u u(n)\prec^u u(\overline{n})\prec^u\dots \prec^u u(\overline{1}).
\end{equation}
This induces a $u$-lexicographic order $\prec^u_{\mathrm{lex}}$ on the set of words of length $n$ in the alphabet $[n]\cup [\overline{n}]$. Then we obtain a linear order $\prec^u$ on $W$, where $v\prec^u w$ if and only if $v(1)\dots v(n)\prec^u_{\mathrm{lex}} w(1)\dots w(n)$. Note that $u$ is the minimal element of $W$ with respect to $\prec^u$.

\begin{definition} \label{defi:algebraic_retraction}
Let $W$ be a Weyl group of classical Lie type and $\mathcal{M}$ an \emph{arbitrary} subset of $W$. For each $u\in W$, we define $\aret_\A(u)$ as the $u$-minimal element of $\mathcal{M}$ with respect to the ordering $\prec^u$. Then the map
$$\aret_\A\colon W\to \A\, (\subset W)$$ is a retraction of $W$ onto $\A$, which we call an \emph{algebraic retraction}.
\end{definition}

\begin{remark}
Originally, we defined $\aret_\A$ following the idea of the retraction introduced in \cite[Definition 3.3]{le-ma18}. The definition presented here is due to a referee. We thank him/her for the simplification of our original definition of $\aret_A$. 
\end{remark}

\begin{example}\label{ex:alg_retraction2}
\begin{enumerate}
\item We take a subset $\A=\{1423, 1432, 2413, 3412\}$ of $\mathfrak{S}_4$. For $u=1324$, we have a linear order
$$1\prec^u 3 \prec^u 2\prec^u 4.$$
Then 
$$1432 \prec^u_{\mathrm{lex}}1423\prec^u_{\mathrm{lex}} 3412\prec^u_{\mathrm{lex}} 2413.$$
Hence $\aret_\A(u)=1432.$
\item We take a subset $\A=\{1\bar{4}23, 14\bar{3}\bar{2}, 2413, \bar{3}\bar{4}1\bar{2}\}$ of $(\Z/2\Z)^4 \rtimes \mathfrak{S}_4$. Then for $u=\bar{2}3\bar{1}4$, we have a linear order 
$$\bar{2} \prec^u 3 \prec^u \bar{1} \prec^u 4 \prec^u \bar{4} \prec^u 1 \prec^u \bar{3} \prec^u 2.$$
Then 
$$14\bar{3}\bar{2} \prec^u_{\mathrm{lex}} 1\bar{4}23\prec^u_{\mathrm{lex}} \bar{3}\bar{4}1\bar{2}\prec^u_{\mathrm{lex}} 2413,$$
so $\aret_\A(\bar{2}3\bar{1}4)=14\bar{3}\bar{2}$. 
\end{enumerate}
\end{example} 

Note that it follows from the definitions of $<^u$ and $\prec^u$ on $W$ that
\begin{itemize}
\item $v<^u w$ if and only if $u^{-1}v < u^{-1}w$ in Bruhat order, and 
\item $v\prec^u w$ if and only if $u^{-1}v\prec^{e} u^{-1}w$.
\end{itemize}

\begin{lemma}\label{lem-lex}
Let $W$ be a Weyl group of classical Lie type and $u,v,w$ elements in~$W$. Then $v\prec^u w$ if $v<^u w$.
\end{lemma}
\begin{proof}
It is enough to check that $v<w$ implies $v\prec^{e} w$. However, this claim immediately follows from the following fact:
\[
v \leq w \text{ if and only if } \{v(1),\dots,v(d)\}\!\!\uparrow~\leq \{ w(1),\dots,w(d) \}\!\!\uparrow \quad \text{for }1 \leq \forall d \leq n-1
\]
(see \cite[Section 3]{BL}). 
\end{proof}

If $W$ is a product $\prod_{j=1}^m W_j$ of Weyl groups $W_j$ of classical Lie types and $\A=\prod_{j=1}^m \A_j$, where $\A_j$ is an arbitrary subset of $W_j$, then we define an algebraic retraction of $W$ onto $\A$ by applying the algebraic retraction to each $\A_j\subset W_j$, i.e. for $u = (u_1,\dots,u_m) \in W$, we define 
\begin{equation}\label{equation_aret_product_fixed_points}
\aret_{\A}(u) \coloneqq (\aret_{\A_1}(u_1), \dots, \aret_{\A_m}(u_m)) \in \A=\prod_{j=1}^m\A_j.
\end{equation}

\begin{lemma} \label{lemm:re}
Let $W$ and $\A$ be as above. 
If $\A$ has a unique minimal element $v$ in the Bruhat order, i.e. $v\le w$ for all $w\in \A$, then $\aret_\A(e)=v$. 
\end{lemma} 

\begin{proof} 
For $v=(v_1,\dots,v_m)$ and $w=(w_1,\ldots,w_m)$ in $W$, we have that $v\leq w$ if and only if $v_j\leq w_j$ in $W_j$ for $1\leq j\leq m$. Therefore, it follows from Lemma~\ref{lem-lex} that $\aret_{\A_j}(e)=v_j$ and hence $\aret_\A(e)=(\aret_{\A_1}(e),\dots,\aret_{\A_m}(e))=(v_1,\dots,v_m)=v.$
\end{proof}

\begin{remark} \label{rema:multiplication}
It follows from the definition of $\aret_\A$ that we have $\aret_{v\A}(vu)=v\aret_\A(u)$ for any $u,v\in W$. Therefore, we have that $d(vu, \aret_{v\A}(vu))=d(u,\aret_\A(u))$. 
\end{remark}

\begin{theorem}\label{thm:aret-mret}
Let $W$ and $\A$ be as in Lemma~\ref{lemm:re}. 
If $\A$ is a Coxeter matroid, then $\aret_\A=\mret_\A.$ 
\end{theorem}
\begin{proof}
If $\A$ is a Coxeter matroid, then
$\A$ contains a unique minimal element in $\A$, say $v$, with respect to the ordering $\le^{u}$, i.e. $u^{-1}v\le u^{-1}w$ for all $w\in \A$. Therefore we have $\aret_{u^{-1}\A}(e)=u^{-1}v$ by Lemma~\ref{lemm:re} and hence $\aret_{\A}(u)=v$ by Remark~\ref{rema:multiplication}. Since $v=\mret_\A(u)$ by definition and $u$ is an arbitrary element of $W$, this proves $\aret_\A=\mret_\A$. 
\end{proof}

Theorem~\ref{thm:aret-mret} together with Proposition~\ref{prop_mret_minimal_distance} shows that if a subset $\A$ of a product of Weyl groups of classical Lie types is a Coxeter matroid, then $\aret_\A(u)$ gives the unique element in $\A$ closest to $u$. The following example shows that even if the subset $\A$ has the property that there is a unique element in $\A$ closest to $u$ for each $u\in W$, $\aret_\A(u)$ does not necessarily give the unique closest element unless $\A$ is a Coxeter matroid. 

\begin{example}\label{example_not_unique_closest}
Let $W=\mathfrak{S}_4$ and $\A=\{1423,2134\}$. Then $\A$ is not a Coxeter matroid because the convex hull $\Delta_\A$ of $\A$, that is the dotted red line in Figure~\ref{fig:aret_not_closest}, is not a $\Phi$-polytope, i.e. the dotted red line is not parallel to any edge of the permutohedron. On the other hand, one can check that $\A$ has the property that for each $u\in \mathfrak{S}_4$, there is a unique element in $\A$ closest to $u$. However, if we take $u=1324$ for instance, then $d(u,2134)=2$ and $d(u,1423)=3$ but $\aret_\A(u)=1423$, see Figure~\ref{fig:aret_not_closest}. Therefore, $\aret_\A(1324)$ does not give the element of $\A$ closest to $1324$. 
\begin{figure}[ht]
\begin{tikzpicture}[scale=5]
\tikzset{every node/.style={draw=blue!50,fill=blue!20, circle, thick, inner sep=1pt,font=\footnotesize}}
\tikzset{red node/.style = {fill=red!20!white, draw=red!75!white}}
\tikzset{red line/.style = {line width=0.3ex, red, nearly opaque}}

\coordinate (4231) at (1/3, 1/2, 1/6); 
\coordinate (2413) at (2/3, 1/2, 1/6); 
\coordinate (1243) at (5/6, 2/3, 1/2); 
\coordinate (2143) at (5/6, 1/2, 1/3); 
\coordinate (2134) at (5/6, 1/3, 1/2); 
\coordinate (1423) at (2/3, 5/6, 1/2); 
\coordinate (3142) at (1/3, 1/2, 5/6); 
\coordinate (1324) at (2/3, 1/2, 5/6); 
\coordinate (1234) at (5/6, 1/2, 2/3); 
\coordinate (1342) at (1/2, 2/3, 5/6); 
\coordinate (4123) at (1/2, 5/6, 1/3); 
\coordinate (4213) at (1/2, 2/3, 1/6); 
\coordinate (1432) at (1/2, 5/6, 2/3); 
\coordinate (4132) at (1/3, 5/6, 1/2); 
\coordinate (2314) at (2/3, 1/6, 1/2); 
\coordinate (3214) at (1/2, 1/6, 2/3); 
\coordinate (3124) at (1/2, 1/3, 5/6); 
\coordinate (3241) at (1/3, 1/6, 1/2); 
\coordinate (2341) at (1/2, 1/6, 1/3); 
\coordinate (2431) at (1/2, 1/3, 1/6);
\coordinate (3421) at (1/6, 1/3, 1/2); 
\coordinate (4321) at (1/6, 1/2, 1/3); 
\coordinate (3412) at (1/6, 1/2, 2/3); 
\coordinate (4312) at (1/6, 2/3, 1/2); 

\draw[thick, draw=blue!70] (1432)--(4132)--(4123)--(1423)--cycle;
\draw[thick, draw=blue!70] (4132)--(1432)--(1342)--(3142)--(3412)--(4312)--(4132);
\draw[dashed, thick, draw=blue!70] (4312)--(4321)--(4231)--(4213)--(4123);
\draw[dashed, thick, draw=blue!70] (3421)--(4321);
\draw[thick, draw=blue!70] (1342)--(1324)--(3124)--(3142);
\draw[dashed, thick, draw=blue!70] (4231)--(2431)--(2413)--(4213);
\draw[thick, draw=blue!70] (1423)--(1243)--(2143);
\draw[dashed, thick, draw=blue!70] (2143)--(2413);
\draw[thick, draw=blue!70] (1324)--(1234)--(1243);
\draw[thick, draw=blue!70] (1234)--(2134)--(2143);
\draw[thick, draw=blue!70] (2314)--(2134);
\draw[dashed, thick, draw=blue!70] (2314)--(2341)--(2431);
\draw[thick, draw=blue!70] (3412)--(3421)--(3241)--(3214)--(3124);
\draw[thick, draw=blue!70] (3214)--(2314);
\draw[dashed,thick, draw=blue!70] (3241)--(2341);

\draw[red line, dashed] (1423)--(2134);

\node[label = {[label distance = 0cm]below left:{1234}}] at (1234) {};
\node[label = {[label distance = 0cm]right:1243}] at (1243) {};
\node[label = {[label distance = 0cm]left:\textcolor{red}{1324}}] at (1324) {};
\node[label = {[label distance = 0cm]below:1342}] at (1342) {};
\node[label = {[label distance = 0cm]right:$\fbox{1423}=\aret_{\A}(\textcolor{red}{1324})$}, red node] at (1423) {};
\node[label = {[label distance = -0.2cm]above:1432}] at (1432) {};
\node[label = {[label distance = 0cm]right:$\fbox{2134}$}, red node] at (2134) {};
\node [label = {[label distance = 0cm]right:2143}] at (2143) {};
\node[label = {[label distance = -0.2cm]below:2314}] at (2314) {};
\node[label = {[label distance = 0cm]right:2341}] at (2341) {};
\node[label = {[label distance = 0cm]left:2413}] at (2413) {};
\node [label = {[label distance = -0.1cm]above:2431}] at (2431) {};
\node [label = {[label distance = 0cm]above:3124}] at (3124) {};
\node[label = {[label distance = 0cm]right:3142}] at (3142) {};
\node[label = {[label distance = -0.2cm]below:3214}] at (3214) {};
\node[label = {[label distance = 0cm]left:3241}] at (3241) {};
\node [label = {[label distance = 0cm]left:3412}] at (3412) {};
\node[label = {[label distance = 0cm]left:3421}] at (3421) {};
\node[label = {[label distance = -0.2cm]above:4123}] at (4123) {};
\node[label = {[label distance = -0.2cm]above:4132}] at (4132) {};
\node [label = {[label distance = 0cm]below:4213}] at (4213) {};
\node[label = {[label distance = -0.2cm]above:4231}] at (4231) {};
\node [label = {[label distance = 0cm]left:4312}] at (4312) {};
\node[label = {[label distance = -0.1cm]below right:4321}] at (4321) {};
\end{tikzpicture}
\caption{$\aret_{\A}(1324)$ is not the element of $\A$ closest to $1324$.}
\label{fig:aret_not_closest}
\end{figure}
\end{example}

\section{Characterization of Coxeter matroids using $d$ and $\aret_{\A}$} \label{sec:characterization}

If a subset $\A$ of $W$ is a Coxeter matroid, then for each $u\in W$, there is a unique element $\y\in \A$ such that $d(u,\y)=d(u,\A)$ by Proposition~\ref{prop_mret_minimal_distance} (1). However, this necessary condition is not sufficient for a subset $\A$ to be a Coxeter matroid as is seen in Example~\ref{example_not_unique_closest}. On the other hand, Theorem~\ref{thm:aret-mret} together with Proposition~\ref{prop_mret_minimal_distance} (2) says that if $\A$ is a Coxeter matroid of a product of Weyl groups of classical Lie types, then the unique element $\y\in\A$ closest to $u$ must be given by $\aret_{\A}(u)$. The following proposition shows that these two necessary conditions are sufficient for a subset $\A$ of $\mathfrak{S}_n$ to be a Coxeter matroid when $\A$ consists of two elements. 

\begin{proposition}\label{prop:two-elements}
Let $\A$ be a subset of $\mathfrak{S}_n$ consisting of two elements. Then $\A$ is a Coxeter matroid if and only if 
\begin{enumerate}
\item for each $u\in \mathfrak{S}_n$, there is a unique $\y\in \A$ such that $d(u,\y)=d(u,\A)$, and
\item $\y=\aret_{\A}(u)$.
\end{enumerate}
\end{proposition} 

\begin{proof}
The `only if’ part follows from Proposition~\ref{prop_mret_minimal_distance} and Theorem~\ref{thm:aret-mret} as explained above, so we will prove the `if' part. 

By Theorem~\ref{thm_GS}, it suffices to prove that the convex hull $\Delta_{\A}$ of $\A$ is a $\Phi$-polytope when the two-element subset $\A$ of $\mathfrak{S}_n$ satisfies (1) and (2) in the proposition, i.e. 
\begin{equation}\label{equation_sstar}
\begin{split}
&\text{
for each $u\in W$, $\aret_\A(u)$ gives a unique element in $\A$ such that} \\
&\text{$d(u,\aret_\A(u))=d(u,\A)$. }
\end{split} \tag{$\ast$}
\end{equation}
We note that $\Delta_\A$ is a $\Phi$-polytope if and only if $\Delta_{v\A}$ is a $\Phi$-polytope for any $v\in \mathfrak{S}_n$. Therefore, by Remark~\ref{rema:multiplication}, we may assume that one of the two elements in $\A$ is the identity element $e$ and we denote the other element by $x$, so that $\A=\{e,x\}$. Finally, we note that $\Delta_{\{e,x\}}$ is a $\Phi$-polytope if and only if $x$ is a transposition. In the following, we prove that $x$ is a transposition when $(\ast)$ is satisfied. 

We express $x$ by one-line notation: 
\[
x=x(1)x(2) \cdots x(n).
\]
We assume that $x(1)\not=1$ for simplicity. (One can see that the following argument will work for $x(k)$ instead of $x(1)$ such that $x(\ell)=\ell$ for all $\ell<k$ but $x(k)\not=k$.) Let $x(i)=1$. Since $x(1)\not=1$, we have $i>1$. We consider two elements in $\mathfrak{S}_n$: 
\begin{equation} \label{eq:uv}
\arraycolsep=1.4pt
\begin{array}{rlccccccc} 
y&=x(i)&x(1)&x(2)&\cdots &x(i-1)&x(i+1)&\cdots &x(n),\\
z&=x(1)&x(i)&x(2)&\cdots &x(i-1)&x(i+1)&\cdots &x(n).
\end{array} 
\end{equation}
We note that since $\A=\{e,x\}$ and $x(1)\not=1$, we have $\aret_\A(u)=e$ if $u(1)=1$ and $\aret_\A(u)=x$ if $u(1)=x(1)$ for $u\in \mathfrak{S}_n$. 
Therefore 
\begin{enumerate}
\item $\aret_\A(y)=e$ since $y(1)=x(i)=1$, and 
\item $\aret_\A(z)=x$ since $z(1)=x(1)$. 
\end{enumerate}
Hence by assumption~\eqref{equation_sstar}, we have that 
\begin{equation}\label{ineq1}
d(y,e) < d(y,x)\text{ and }d(z,x) < d(z,e).
\end{equation}
We note that 
\begin{enumerate}
\item[(3)] $d(y,x)=i-1$ since $y=xs_{i-1}s_{i-2}\cdots s_1$, where $s_r$ $(1\le r\le n-1)$ denotes the simple reflection switching $r$ and $r+1$, 
\item[(4)] $d(z,e)=d(y,e)+1$ since $z=ys_1$ and $z(1)=x(1)>1=x(i)=z(2)$,
\item[(5)] $d(z,x)=d(y,x)-1=i-2$ since $x(i)=1$.
\end{enumerate}
Then it follows from (3), (4), (5) above and \eqref{ineq1} that 
\begin{equation}\label{dist2}
d(y,e)=i-2,\,d(z,e)=i-1,\,\text{and }d(z,x)=i-2.
\end{equation}

Suppose that there exists some $j$ such that $3\le j<n$ and $z(j)>z(j+1)$. Then for $w=zs_j$, we have 
\[
d(w,e)=d(z,e)-1\quad\text{and}\quad d(w,x)=d(z,x)+1.
\]
It follows from these identities and \eqref{dist2} that
\begin{equation} \label{eq:dw}
d(w,e)+1=d(w,x).
\end{equation}
However, $\aret_\A(w)=x$ since $w(1)=z(1)=x(1)$. Hence $d(w,x)<d(w,e)$ by assumption~\eqref{equation_sstar}. This contradicts~\eqref{eq:dw}, so there exists no $j$ such that $3\le j<n$ and $z(j)>z(j+1)$. It follows from \eqref{eq:uv} that 
\begin{equation} \label{eq:xineq}
x(2)<x(3)<\dots <x(i-1)<x(i+1)<\dots<x(n).
\end{equation}
Since $x(i)=1$, it follows from \eqref{eq:uv}, \eqref{dist2} and \eqref{eq:xineq} that $x(1)=i$ and $x(j)=j$ for $j\not=1,i$. Thus $x$ is the transposition which transposes $1$ and $i$. 
\end{proof}

\begin{example}
Let $\A=\{1234,4231\}\subset \mathfrak{S}_4$. Then for every element $u\in\mathfrak{S}_4$, $\aret_\A(u)$ gives the unique closest element in $\A$. Figure~\ref{fig:two-elt} shows that $\aret_\A$ maps the elements colored by green to $4231$ while the elements colored by blue to $1234$. In fact, $\A$ is a Coxeter matroid since $\Delta_\A$ is a $\Phi$-polytope. 
\end{example}

\begin{figure}[ht]
\begin{tikzpicture}[scale=5]
\tikzset{every node/.style={draw=blue!50,fill=blue!20, circle, thick, inner sep=1pt,font=\footnotesize}}
\tikzset{red node/.style = {fill=red!20!white, draw=red!75!white}}
\tikzset{red line/.style = {line width=0.3ex, red, nearly opaque}}
\coordinate (4231) at (1/3, 1/2, 1/6); 
\coordinate (2413) at (2/3, 1/2, 1/6); 
\coordinate (1243) at (5/6, 2/3, 1/2); 
\coordinate (2143) at (5/6, 1/2, 1/3); 
\coordinate (2134) at (5/6, 1/3, 1/2); 
\coordinate (1423) at (2/3, 5/6, 1/2); 
\coordinate (3142) at (1/3, 1/2, 5/6); 
\coordinate (1324) at (2/3, 1/2, 5/6); 
\coordinate (1234) at (5/6, 1/2, 2/3); 
\coordinate (1342) at (1/2, 2/3, 5/6); 
\coordinate (4123) at (1/2, 5/6, 1/3); 
\coordinate (4213) at (1/2, 2/3, 1/6); 
\coordinate (1432) at (1/2, 5/6, 2/3); 
\coordinate (4132) at (1/3, 5/6, 1/2); 
\coordinate (2314) at (2/3, 1/6, 1/2); 
\coordinate (3214) at (1/2, 1/6, 2/3); 
\coordinate (3124) at (1/2, 1/3, 5/6); 
\coordinate (3241) at (1/3, 1/6, 1/2); 
\coordinate (2341) at (1/2, 1/6, 1/3); 
\coordinate (2431) at (1/2, 1/3, 1/6);
\coordinate (3421) at (1/6, 1/3, 1/2); 
\coordinate (4321) at (1/6, 1/2, 1/3); 
\coordinate (3412) at (1/6, 1/2, 2/3); 
\coordinate (4312) at (1/6, 2/3, 1/2); 
\draw[thick, draw=blue!70] (1432)--(4132)--(4123)--(1423)--cycle;
\draw[thick, draw=blue!70] (4132)--(1432)--(1342)--(3142)--(3412)--(4312)--(4132);
\draw[dashed, thick, draw=blue!70] (4312)--(4321)--(4231)--(4213)--(4123);
\draw[dashed, thick, draw=blue!70] (3421)--(4321);
\draw[thick, draw=blue!70] (1342)--(1324)--(3124)--(3142);
\draw[dashed, thick, draw=blue!70] (4231)--(2431)--(2413)--(4213);
\draw[thick, draw=blue!70] (1423)--(1243)--(2143);
\draw[dashed, thick, draw=blue!70] (2143)--(2413);
\draw[thick, draw=blue!70] (1324)--(1234)--(1243);
\draw[thick, draw=blue!70] (1234)--(2134)--(2143);
\draw[thick, draw=blue!70] (2314)--(2134);
\draw[dashed, thick, draw=blue!70] (2314)--(2341)--(2431);
\draw[thick, draw=blue!70] (3412)--(3421)--(3241)--(3214)--(3124);
\draw[thick, draw=blue!70] (3214)--(2314);
\draw[dashed,thick, draw=blue!70] (3241)--(2341);
\node[label = {[label distance = 0cm]below left:1234}, fill=Maroon!20,draw=Maroon, inner sep=1.3pt] at (1234) {};
\node[label = {[label distance = 0cm]right:1243}] at (1243) {};
\node[label = {[label distance = 0cm]left:1324}] at (1324) {};
\node[label = {[label distance = 0cm]below:1342}] at (1342) {};
\node[label = {[label distance = 0cm]right:1423}] at (1423) {};
\node[label = {[label distance = -0.2cm]above:1432}] at (1432) {};
\node[label = {[label distance = 0cm]right:2134}] at (2134) {};
\node [label = {[label distance = 0cm]right:2143}] at (2143) {};
\node[label = {[label distance = -0.2cm]below:2314}] at (2314) {};
\node[label = {[label distance = 0cm]right:2341},fill=OliveGreen!80,draw=OliveGreen] at (2341) {};
\node[label = {[label distance = 0cm]left:2413},fill=OliveGreen!80,draw=OliveGreen] at (2413) {};
\node [label = {[label distance = -0.1cm]above:2431},fill=OliveGreen!80,draw=OliveGreen] at (2431) {};
\node [label = {[label distance = 0cm]above:3124}] at (3124) {};
\node[label = {[label distance = 0cm]right:3142}] at (3142) {};
\node[label = {[label distance = -0.2cm]below:3214}] at (3214) {};
\node[label = {[label distance = 0cm]left:3241},fill=OliveGreen!80,draw=OliveGreen] at (3241) {};
\node [label = {[label distance = 0cm]left:3412},fill=OliveGreen!80,draw=OliveGreen] at (3412) {};
\node[label = {[label distance = 0cm]left:3421},fill=OliveGreen!80,draw=OliveGreen] at (3421) {};
\node[label = {[label distance = -0.2cm]above:4123},fill=OliveGreen!80,draw=OliveGreen] at (4123) {};
\node[label = {[label distance = -0.2cm]above:4132},fill=OliveGreen!80,draw=OliveGreen] at (4132) {};
\node [label = {[label distance = 0cm]below:4213},fill=OliveGreen!80,draw=OliveGreen] at (4213) {};
\node[label = {[label distance = -0.2cm]above:4231}, fill=Brown!80,draw=Brown, inner sep = 1.3pt] at (4231) {};
\node [label = {[label distance = 0cm]left:4312},fill=OliveGreen!80,draw=OliveGreen] at (4312) {};
\node[label = {[label distance = -0.1cm]below right:4321},fill=OliveGreen!80,draw=OliveGreen] at (4321) {};
\end{tikzpicture}
\caption{$\aret_\A$ for the two-element subset $\A=\{1234,4231\}$.}\label{fig:two-elt}
\end{figure}

It is natural to ask whether the assumption on the cardinality of $\A$ can be removed in Proposition~\ref{prop:two-elements}, see the question in Introduction. It is easy to check that the proposition holds for any subset $\A$ of $\mathfrak{S}_n$ when $n\le 3$ and 
a computer check shows that this is also the case when $n=4$, but we do not know whether the proposition holds for any subset $\A$ of $\mathfrak{S}_n$ when $n\ge 5$.

\end{document}